\begin{document}

\title*{Braid groups  in handlebodies and corresponding Hecke algebras}
% Use \titlerunning{Short Title} for an abbreviated version of
% your contribution title if the original one is too long
\author{Valeriy G. Bardakov}
% Use \authorrunning{Short Title} for an abbreviated version of
% your contribution title if the original one is too long

\institute{Valeriy G. Bardakov \at Sobolev Institute of Mathematics, Novosibirsk State University, Novosibirsk State Agrarian University, Novosibirsk 630090, Russia, \email{bardakov@math.nsc.ru}}

%
% Use the package "url.sty" to avoid
% problems with special characters
% used in your e-mail or web address
%
\maketitle

%\abstract*{In this paper we study the kernel of the homomorphism $B_{g,n} \to B_n$ of the braid group $B_{g,n}$ in the handlebody $\mathcal{H}_g$ to the braid group $B_n$. We prove that this kernel is  semi-direct products of free groups. Also, we introduce an algebra $H_{g,n}(q)$, which is some analog of the Hecke algebra $H_n(q)$, constructed by the braid group $B_n$.}

\abstract{In this paper we study the kernel of the homomorphism $B_{g,n} \to B_n$ of the braid group $B_{g,n}$ in the handlebody $\mathcal{H}_g$ to the braid group $B_n$. We prove that this kernel is  semi-direct product of free groups. Also, we introduce an algebra $H_{g,n}(q)$, which is some analog of the Hecke algebra $H_n(q)$, constructed by the braid group~$B_n$.}

\section{Introduction}

Let $\mathcal{H}_g$ be a handlebody of genus $g$. The braid group  $B_{g,n}$ on $n$ stings in the handlebody $\mathcal{H}_g$ was introduced by A.~B.~Sossinsky \cite{S} and independently S.~Lambropoulou \cite{L1}. Properties of this group are studied by V.~V.~Vershinin \cite{V, V1} and by S.~Lambropolou \cite{L1, L2}.

The motivation for studying braids from $B_{g,n}$ comes from studying oriented knots and links in knot complements in compact connected oriented 3-manifolds and in handlebodies, since these spaces may be represented by a fixed braid or a fixed integer-framed braid in  $\mathbb{S}^3$ \cite{LR1, L1, HL, LR2}. Then knots and links in these spaces may be represented by elements of the braid group $B_{g,n}$ or an appropriate cosets of these group \cite{L1}. In particular, if $M$ denotes the complements of the $g$-unlink or a connected sum of $g$ lens spaces of type $L(p, 1)$ or a handlebody of genus $g$, then knots and links in these spaces may be represented precisely by the braids in $B_{g,n}$ for $n \in \mathbb{N}$. In the case $g=1$, $B_{1,n}$ is the Artin group of type  $\mathcal{B}$.

 The group $B_{g,n}$ can be considered as a subgroup of the  braid group $B_{g+n}$ on $g+n$ strings such that the braids in $B_{g,n}$ leave the first $g$ strings identically fixed. Using this fact V. V. Vershinin \cite{V} and S.~Lambropoulou \cite{L1} defined a epimorphism of $B_{g,n}$ onto the symmetric group $S_n$ and prove that the kernel of this epimorphism: $P_{g,n}$ is a subgroup of the pure braid group $P_{g+n}$ and is semi-direct products of free groups.
On the other side, V.~V.~Vershinin \cite{V} noted that there is a decomposition $B_{g,n} = R_{g,n} \leftthreetimes B_n$ for some group $R_{g,n}$ and the braid group $B_n$. S.~Lambropoulou proved in \cite{L2} that $R_{1,n}$ is isomorphic to a free group $F_n$ of rank $n$.

The Hecke algebra of type $\mathrm{A}$, $H_n(q)$ was used by V.~F.~R.~Jones \cite{J} for the construction of polynomial invariant for classical links, the well known HOMFLYPT polynomial. S.~Lambropoulou \cite{L2} defined a generalization of the Hecke algebra of type $\mathrm{B}$ to construct a HOMFLYPT-type polynomial invariant for links in the solid torus, which was then used in \cite{DiLaPr} for extending the study to the lens spaces $L(p,1)$.

In the present paper we define some decomposition of $P_{g,n}$ as  semi-direct product of free groups, which is different from the decomposition defined in \cite{L1, V}.
Also, we study the group $R_{g,n}$  and prove that this group is  semi-direct products of  free groups.
Using this decomposition  and the decomposition $B_{g,n} = R_{g,n} \leftthreetimes B_n$ we  define some algebra, which is a generalization of the Hecke algebra $H_n(q)$.

The paper is organized as follows. In Section 2, we remind some facts on the braid group $B_n$. In particular, we define vertical and horizontal decompositions of the pure braid group $P_n$. In Section 3, we recall some facts on the group $B_{g,n}$, we describe the vertical decomposition of $P_{g,n}$ and we define the horizontal decomposition of $P_{g,n}$. In Section 4, we study  $R_{g,n}$   and we construct vertical and horizontal decompositions for this group. In Section 5 we introduce some algebra $H_{g,n}(q)$ which is a generalization of the Hecke  algebra $H_{n}(q)$ and we find the quotient of $B_{g,n}$ by the relations $\sigma_i^2 = 1$.

\subsection*{Acknowledgements. } The author gratefully acknowledges Prof. Lambro\-pou\-lou and her students and colleagues: Neslihan G\"{u}g\"{u}mc\"{u}, Stathis Antoniou, Dimos Goundaroulis, Ioannis Diamantis, Dimitrios Kodokostas for the kind invitation to the Athens, where this paper was written, for  conversation and interesting discussions.

This work was supported by the  Russian Foundation for Basic Research (project 16-01-00414).

\section{Braid group}

In this section we recall some facts on the braid groups (see \cite{Bir, M}).

The braid group $B_m$, $m \geq 2$, on $m$ strings is generated by elements
$$
\sigma_1, \sigma_2, \ldots, \sigma_{m-1},
$$
and is defined by relations
$$
\begin{array}{ll}
\sigma_i \sigma_j = \sigma_j \sigma_i, & ~\mbox{for}~|i - j| >1, \\
\sigma_i \sigma_{i+1} \sigma_i = \sigma_{i+1} \sigma_i \sigma_{i+1}, & ~\mbox{for}~i = 1, 2, \ldots, m-2.
\end{array}
$$
A subgroup of $B_m$ which is generated by elements
$$
a_{ij} = \sigma_{j-1} \sigma_{j-2} \ldots \sigma_{i+1} \sigma_{i}^2 \sigma_{i+1}^{-1} \ldots \sigma_{j-2}^{-1} \sigma_{j-1}^{-1},~~~1 \leq i < j \leq m,
$$
is called the {\it pure braid group} and is denoted $P_m$. This group is defined by the relations
 \begin{align}
& a_{ik} a_{ij} a_{kj} = a_{kj} a_{ik} a_{ij},   \label{re2}\\
& a_{nj} a_{kn} a_{kj} = a_{kj} a_{nj} a_{kn},  ~\mbox{for}~n < j, \label{re3}\\
& (a_{kn} a_{kj} a_{kn}^{-1}) a_{in} = a_{in} (a_{kn} a_{kj} a_{kn}^{-1}),  ~\mbox{for}~i < k < n < j, \label{re4}\\
& a_{kj} a_{in} = a_{in} a_{kj},  ~\mbox{for}~k < i < n < j ~\mbox{or}~n < k. \label{re1}
\end{align}
The subgroup $P_m$ is normal in $B_m$, and the quotient $B_m / P_m$ is the symmetric group $S_m$. The generators of $B_m$ act on the generator $a_{ij} \in P_m$ by the rules:
 \begin{align}
& \sigma_k^{-1} a_{ij} \sigma_k =  a_{ij},  ~\mbox{for}~k \not= i-1, i, j-1, j, \label{c1}\\
& \sigma_{i}^{-1} a_{i,i+1} \sigma_{i} =  a_{i,i+1},   \label{c2}\\
& \sigma_{i-1}^{-1} a_{ij} \sigma_{i-1} =   a_{i-1,j},   \label{c3}\\
& \sigma_{i}^{-1} a_{ij} \sigma_{i} =  a_{i+1,j} [a_{i,i+1}^{-1}, a_{ij}^{-1}],  ~\mbox{for}~j \not= i+1 \label{c4}\\
& \sigma_{j-1}^{-1} a_{ij} \sigma_{j-1} =  a_{i,j-1},   \label{c5}\\
& \sigma_{j}^{-1} a_{ij} \sigma_{j} =  a_{ij} a_{i,j+1} a_{ij}^{-1},   \label{c6}
\end{align}
where $[a, b] = a^{-1} b^{-1} a b$.

Denote by
$$
U_{i} = \langle a_{1i}, a_{2i}, \ldots, a_{i-1,i} \rangle,~~~i = 2, \ldots, m,
$$
a subgroup of $P_m$.
It is known that $U_i$ is a free group of rank $i-1$. One can rewrite the relations of $P_m$ as the following conjugation rules (for $\varepsilon = \pm 1$):
  \begin{align}
& a_{ik}^{-\varepsilon} a_{kj}  a_{ik}^{\varepsilon} = (a_{ij} a_{kj})^{\varepsilon} a_{kj} (a_{ij} a_{kj})^{-\varepsilon},  \label{co1}\\
& a_{kn}^{-\varepsilon} a_{kj}  a_{kn}^{\varepsilon} = (a_{kj} a_{nj})^{\varepsilon} a_{kj} (a_{kj} a_{nj})^{-\varepsilon},  ~\mbox{for}~n < j, \label{co2}\\
& a_{in}^{-\varepsilon} a_{kj}  a_{in}^{\varepsilon} = [a_{ij}^{-\varepsilon}, a_{nj}^{-\varepsilon}]^{\varepsilon} a_{kj} [a_{ij}^{-\varepsilon}, a_{nj}^{-\varepsilon}]^{-\varepsilon},  ~\mbox{for}~i < k < n, \label{co3}\\
& a_{in}^{-\varepsilon} a_{kj} a_{in}^{\varepsilon} = a_{kj},  ~\mbox{for}~k < i < n < j ~\mbox{or}~  n < k. \label{co4}
\end{align}

From these rules it follows that $U_m$
 is normal in $P_m$ and hence $P_m$  has the following decomposition: $P_m = U_m \leftthreetimes P_{m-1}$, where the action of $P_{m-1}$ on $U_m$ is define by the rules (\ref{co1}-\ref{co4}). By induction on $m$, $P_m$ is the semi-direct product of free groups:
$$
P_m =U_m \leftthreetimes (U_{m-1} \leftthreetimes (\ldots  \leftthreetimes (U_3 \leftthreetimes U_2)\ldots )).
$$
We will call this decomposition  {\it vertical decomposition}.

Let $U_m^{(k)}$, $k = 1, 2, \ldots, m$, be the subgroup of $P_m$ which is generated by $a_{ij}$, where $k < j$. Then $U_m^{(k)} = U_m \leftthreetimes (U_{m-1} \leftthreetimes (\ldots  \leftthreetimes (U_{k+2}  \leftthreetimes U_{k+1})\ldots ))$. By  definition $U_m^{(1)} = P_m$ and these groups form the normal series
$$
1 = U_m^{(m)} \leq U_m^{(m-1)}  \leq \ldots \leq U_m^{(2)}  \leq U_m^{(1)} = P_m,
$$
where
$$
U_m^{(r)} / U_m^{(r+1)} \cong F_{r}, ~~~r = 1, 2, \ldots, m-1.
$$

On the other side,  define the following subgroups of $P_m$:
$$
V_k = \langle a_{k,k+1}, a_{k,k+2}, \ldots, a_{k,m} \rangle,~~k = 1, 2, \ldots, m-1.
$$
This group is free of rank $m-k$.  Using the defining relation of $P_m$, it is not difficult to prove the following:

\begin{lemma}\label{l1}
In $P_n$ hold the following conjugation rules (for $\varepsilon = \pm 1$):

1) $a_{kj}^{-\varepsilon} a_{ik} a_{kj}^{\varepsilon} = (a_{ik} a_{ij})^{\varepsilon} a_{ik} (a_{ik} a_{ij})^{-\varepsilon},$ where $i < k < j$;

2) $a_{jk}^{-\varepsilon} a_{ik} a_{jk}^{\varepsilon} = (a_{ij} a_{ik})^{\varepsilon} a_{ik} (a_{ij} a_{ik})^{-\varepsilon},$ where $i < j < k$;

3) $a_{kn}^{-\varepsilon} a_{ij} a_{kn}^{\varepsilon} = [a_{ik}^{-\varepsilon},  a_{in}^{-\varepsilon}]^{\varepsilon} a_{ij} [a_{ik}^{-\varepsilon},  a_{in}^{-\varepsilon}]^{-\varepsilon},$ where $i < k < j < n$;

4) $a_{in}^{-\varepsilon} a_{kj} a_{in}^{\varepsilon} = a_{kj},$ where $k < i$ and $n < j$;

5) $a_{kj}^{-\varepsilon} a_{in} a_{kj}^{\varepsilon} = a_{in},$ where $n < k$.
\end{lemma}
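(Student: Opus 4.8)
The plan is to derive all five identities from the defining relations (\ref{re2})--(\ref{re1}) (equivalently, from the conjugation rules (\ref{co1})--(\ref{co4})): items 1)--5) are the ``row'' companions of (\ref{co1})--(\ref{co4}), conjugating the generators of the subgroups $V_k$ rather than those of $U_j$. I would prove the case $\varepsilon=+1$ in each item and obtain $\varepsilon=-1$ by the same computation applied to the $\varepsilon=-1$ forms of the input relations. The five cases fall into three types according to how the index sets of the two generators sit with respect to each other: the \emph{triangle} cases 1) and 2), where the two generators share an index and the third index completes a triple; the \emph{disjoint} cases 4) and 5); and the single \emph{crossing} case 3).

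For the triangle cases I would first record the symmetric form of the triple relation: combining (\ref{re2}) with the suitably relabelled (\ref{re3}) gives, for every $p<q<r$,
\[
 a_{pq}a_{pr}a_{qr}=a_{qr}a_{pq}a_{pr}=a_{pr}a_{qr}a_{pq},
\]
so that each of $a_{pq},a_{pr},a_{qr}$ commutes with the product of the other two taken in the correct cyclic order. Item 1) (with $(p,q,r)=(i,k,j)$) and item 2) (with $(p,q,r)=(i,j,k)$) then follow by expanding the conjugation-by-a-product on their right-hand sides and matching it, via this chain, to the appropriate one of these commutations; the two signs of $\varepsilon$ use two different equalities of the chain but no extra input.

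The disjoint cases are essentially a restatement of (\ref{re1})/(\ref{co4}). Since $a_{in}$ forces $i<n$ and $a_{kj}$ forces $k<j$, the hypothesis of 4) ($k<i$, $n<j$) yields $k<i<n<j$ and that of 5) ($n<k$) yields $i<n<k<j$; these are exactly the two non-interleaved patterns for which (\ref{re1}) asserts that $a_{in}$ and $a_{kj}$ commute, and commuting is symmetric, so both conjugation directions and both signs are immediate.

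The crossing case 3) is where the work concentrates and is the main obstacle. After relabelling its four indices $i<k<j<n$ as $1<2<3<4$, it asserts the conjugation of $a_{13}$ by $a_{24}$, which is precisely the reverse of the conjugation of $a_{24}$ by $a_{13}$ recorded in (\ref{co3}). To obtain it I would use the vertical decomposition $P_m=U_m\leftthreetimes P_{m-1}$: since $a_{24},a_{14},a_{34}\in U_4$ and $U_4$ is normal, $a_{24}^{-1}a_{13}a_{24}=a_{13}\cdot\big((a_{13}^{-1}a_{24}^{-1}a_{13})\,a_{24}\big)$ with the bracketed factor lying in $U_4$ and computable from (\ref{co3}); the remaining task is to simplify this $U_4$-word, using the triangle identities on $\{1,2,4\}$ and the crossing relation (\ref{re4}), into the commutator shape $[a_{12}^{-1},a_{14}^{-1}]\,a_{13}\,[a_{12}^{-1},a_{14}^{-1}]^{-1}$ demanded by the statement. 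This bookkeeping --- re-expressing an element of $U_4$ written in the letters $a_{14},a_{34}$ in terms of $a_{12},a_{14}$ while commuting $a_{13}$ past it --- is the genuinely delicate step; alternatively one can derive 3) straight from (\ref{re4}) together with the triangle identities, at the cost of a longer but entirely mechanical calculation. As before, the sign $\varepsilon=-1$ follows by running the same argument with the $\varepsilon=-1$ form of (\ref{co3}) and (\ref{re4}).
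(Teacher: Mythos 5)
Your proposal is correct, and it takes the only approach the paper itself offers: the paper states Lemma \ref{l1} without proof, remarking just that it follows from the defining relations of $P_m$, which is exactly the derivation you outline from (\ref{re2})--(\ref{re1}) and (\ref{co1})--(\ref{co4}) (triangle chain for cases 1)--2), the commutation relation (\ref{re1}) for cases 4)--5), and the interleaved relation for case 3)). For what it is worth, the step you leave as bookkeeping --- the crossing case 3) --- does go through as you planned: the triangle chain on $\{i,k,n\}$ rewrites $a_{kn}[a_{ik}^{-\varepsilon},a_{in}^{-\varepsilon}]^{\varepsilon}$ as $a_{in}^{\varepsilon}a_{kn}a_{in}^{-\varepsilon}$, so the claim becomes the statement that $a_{ij}$ commutes with $a_{in}a_{kn}a_{in}^{-1}$, and this commutation follows by a short free cancellation after conjugating each of the three factors by $a_{ij}$ via (\ref{co2}) and (\ref{co3}).
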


From this lemma it follows that $V_1$ is normal in $P_m$ and we have decomposition $P_m = V_1 \leftthreetimes P_{m-1}$. By induction on $m$, $P_m$ is the semi-direct products of free groups:
$$
P_m =V_1 \leftthreetimes (V_{2} \leftthreetimes (\ldots  \leftthreetimes (V_{m-2}  \leftthreetimes V_{m-1})\ldots )).
$$
We will call this decomposition  {\it horizontal decomposition}. Let $V_m^{(k)}$ be a subgroup of $P_m$ which is generated by $a_{ij}$ for $i < k$. Then
we have the normal series
$$
1 = V_m^{(1)} \leq V_m^{(2)}  \leq \ldots \leq V_m^{(m-1)}  \leq V_m^{(m)} = P_m,
$$
where
$$
V_m^{(r)} / V_m^{(r-1)} \cong F_{m-r+1}, ~~~r =  2, 3, \ldots, m.
$$

A motivation for the  terms vertical and horizontal is as follows. If we put the generators of $P_m$ in the following table
$$
\begin{array}{ccccc}
  a_{12}, &  a_{13}, & \ldots &  a_{1,m-1}, & a_{1,m}, \\
 &  a_{23}, & \ldots & a_{2,m-1}, & a_{2,m}, \\
\ldots & \ldots & \ldots & \ldots & \ldots \\
 &  &  & a_{m-2,m-1}, & a_{m-2,m}, \\
 & &  &  & a_{m-1,m},
\end{array}
$$
then the generators from the $k$th row generate $V_k$ and the generators from the $k$th column  generate $U_{k+1}$. The group $U_m^{(r)}$ is generated by the last $m-r$ columns of this table and the group $V_m^{(r)}$ is generated by the first $r-1$ rows of this table.

\section{Braid groups in handlebodies}
 Recall some facts on the braid group  $B_{g,n}$ on $n$ strings in the handlebody $\mathcal{H}_g$ (see \cite{L1, S, V}).
The group
$B_{g,n}$ is  generated by elements
$$
\tau_1, \tau_2, \ldots \tau_g, \sigma_{g+1}, \sigma_{g+2}, \ldots, \sigma_{g+n-1},
$$
and is defined by the  following list of relations
$$
\begin{array}{ll}
\sigma_i \sigma_j = \sigma_j \sigma_i, & ~\mbox{for}~|i - j| >1, \\
\sigma_i \sigma_{i+1} \sigma_i = \sigma_{i+1} \sigma_i \sigma_{i+1}, & ~\mbox{for}~i = g+1, \ldots, g+n-2, \\
\tau_k \sigma_i = \sigma_i \tau_k, & ~\mbox{for}~k \geq 1, ~~i \geq g+2, \\
\tau_k (\sigma_{g+1} \tau_k \sigma_{g+1}) = (\sigma_{g+1} \tau_k \sigma_{g+1}) \tau_{k}, & ~\mbox{for}~k = 1, \ldots, g, \\
\tau_k (\sigma_{g+1}^{-1} \tau_{k+l} \sigma_{g+1}) = (\sigma_{g+1}^{-1} \tau_{k+l} \sigma_{g+1}) \tau_{k}, & ~\mbox{for}~k = 1, \ldots, g-1, ~~l = 1,  \ldots, g-k.
\end{array}
$$

The group $B_{g,n}$ can be considered as a subgroup of the classical braid group $B_{g+n}$ on $n+g$ strings such that the braids from $B_{g,n}$ leave the first $g$ strings unbraided. Then $\tau_k = a_{k,g+1}$, in $B_{g+n}$, i. e.
$$
\tau_k = \sigma_g \sigma_{g-1} \ldots \sigma_{k+1} \sigma_{k}^2 \sigma_{k+1}^{-1} \ldots \sigma_{g-1}^{-1} \sigma_{g}^{-1},~~~k = 1, 2, \ldots, g.
$$
The elements $\tau_k$, $k = 1, 2, \ldots, g$, generate a free group  of rank $g$ which is isomorphic to $U_{g+1} = \langle a_{1,g+1}, a_{2,g+1}, \ldots, a_{g,g+1} \rangle$ in $B_{g+n}$. Also, we see that some other generators of $P_{g+n}$ lie in $B_{g,n}$. Put them in the table bellow:
$$
\begin{array}{ccccc}
  a_{1,g+1}, &  a_{1,g+2}, & \ldots &  a_{1,g+n-1}, & a_{1,g+n}, \\
  a_{2,g+1}, &  a_{2,g+2}, & \ldots &  a_{2,g+n-1}, & a_{2,g+n}, \\
\ldots & \ldots & \ldots & \ldots & \ldots \\
  a_{g-1,g+1}, &  a_{g-1,g+2}, & \ldots &  a_{g-1,g+n-1}, & a_{g-1,g+n}, \\
  a_{g,g+1}, &  a_{g,g+2}, & \ldots &  a_{g,g+n-1}, & a_{g,g+n}, \\
  \hline
 &  a_{g+1,g+2}, & \ldots &  a_{g+1,g+n-1}, & a_{g+1,g+n}, \\
 & \ldots & \ldots & \ldots & \ldots \\
  &   &  &  a_{g+n-2,g+n-1}, & a_{g+n-2,g+n}, \\
   &   &  &   & a_{g+n-1,g+n}. \\
\end{array}
$$

We will denote by $\widetilde{B}_n$ the subgroup of $B_{g,n}$ which is generated by $\sigma_{g+1},$ $\sigma_{g+2},$ $\ldots, \sigma_{g+n-1}$. It is evident that
$\widetilde{B}_n$ is isomorphic to $B_n$. The corresponding pure braid group of $\widetilde{B}_n$ will be denote $\widetilde{P}_n$. This group is isomorphic to $P_n$ and is generated by elements from the above table which lie under the horizontal line. The group $\widetilde{P}_n$ has the following vertical decomposition
$$
\widetilde{P}_n =\widetilde{U}_n \leftthreetimes (\widetilde{U}_{n-1} \leftthreetimes (\ldots  \leftthreetimes (\widetilde{U}_3 \leftthreetimes \widetilde{U}_2)\ldots )),
$$
where
$$
\widetilde{U}_i = \langle  a_{g+1,g+i},  a_{g+2,g+i},  \ldots,  a_{g+i-1,g+i}\rangle,~~~i = 2, 3, \ldots,n.
$$

There is an epimorphism:
$$
\psi_n : B_{g,n} \longrightarrow  S_n,
$$
which is defined by the rule
$$
\psi_n(\tau_k) = 1,~k = 1, 2, \ldots, g,~~~\psi_n(\sigma_i) = (i, i+1),~i = g+1, g+2, \ldots, g+n-1.
$$
This epimorphism is induced by the standard epimorphism $B_{g+n} \longrightarrow  S_{g+n}$. Let $P_{g,n} = \mathrm{Ker} (\psi_n)$. Then $P_{g,n}$ is generated by the element from the table above.
In \cite{L1, V}  it was proved that there exists the following short exact sequence:
$$
1 \longrightarrow P_{g,n} \longrightarrow B_{g,n} \longrightarrow S_n \longrightarrow 1,
$$
and was found the vertical decomposition of $P_{g,n}$:
$$
P_{g,n} = U_{g+n} \leftthreetimes (U_{g+n-1} \leftthreetimes ( \ldots  \leftthreetimes (U_{g+2}  \leftthreetimes U_{g+1})\ldots )).
$$
If we let $U_{g+n}^{(g+n-i)} = U_{g+n} \leftthreetimes (U_{g+n-1} \leftthreetimes ( \ldots  \leftthreetimes (U_{g+n-i+2}  \leftthreetimes U_{g+n-i+1})\ldots ))$,
then we get the normal series:
$$
1 = U_{g+n}^{(g+n)} \leq U_{g+n}^{(g+n-1)}  \leq  \ldots \leq U_{g+n}^{(g)}   = P_{g,n},
$$
where
$$
U_{g+n}^{(r)} / U_{g+n}^{(r+1)} \cong F_r, ~~~r = g, g+1, \ldots, g+n-1.
$$
The homomorphism which is induced by the embedding $B_{g,n} \longrightarrow B_{g+n}$ sends this normal series to the corresponding normal series for $P_{g+n}$.

Let us construct the horizontal  decomposition  for $P_{g,n}$. To do this, define the following subgroups in $P_{g,n}$:
$$
\begin{array}{l}
V_{g,1} = \langle a_{1,g+1}, a_{1,g+2}, \ldots, a_{1,g+n} \rangle,  \\
V_{g,2} = \langle a_{2,g+1}, a_{2,g+2}, \ldots, a_{2,g+n} \rangle, \\
.......................................................\\
V_{g,g} = \langle a_{g,g+1}, a_{g,g+2}, \ldots, a_{g,g+n} \rangle, \\
V_{g,g+1} = \langle a_{g+1,g+2}, a_{g+1,g+3}, \ldots, a_{g+1,g+n} \rangle,  \\
V_{g,g+2} = \langle a_{g+2,g+3}, a_{g+2,g+4}, \ldots, a_{g+2,g+n} \rangle, \\
.......................................................\\
V_{g,g+n-1} = \langle a_{g+n-1,g+n} \rangle. \\
\end{array}
$$
We see that $V_{g,g+i} = V_{g+i}$ for all $i = 1, 2, \ldots, n-1$,  these subgroups
lie in $\widetilde{P}_n$ and as we know
$$
\widetilde{P}_n = V_{g,g+1} \leftthreetimes (V_{g,g+2} \leftthreetimes ( \ldots  \leftthreetimes (V_{g,g+n-2}  \leftthreetimes V_{g,g+n-1}) \ldots ))
$$
is the horizontal decomposition of $\widetilde{P}_n$.

We see that the vertical decomposition of $P_{g,n}$ is a part of the vertical decomposition for $P_{g+n}$. For the horizontal decomposition situation is more complicated.

\begin{example}
The horizontal decomposition of $P_4$ has the form
$$
P_4 = V_1 \leftthreetimes (V_2 \leftthreetimes V_3),
$$
where
$$
V_1 = \langle a_{12}, a_{13}, a_{14} \rangle,~~V_2 = \langle a_{23}, a_{24} \rangle,~~V_3 = \langle  a_{34} \rangle,
$$
and $V_1$ is normal in $P_4$, $V_2$ is normal in $V_2 \leftthreetimes V_3$.
The group $P_{2,2}$ contains subgroups
$$
V_{2,1} = \langle a_{13}, a_{14} \rangle,~~V_{2,2} = V_2 = \langle a_{23}, a_{24} \rangle,~~V_{2,3} = V_3 = \langle  a_{34} \rangle,
$$
but in this case $V_{2,1}$ is not normal in $P_{2,2}$, Indeed, from Lemma \ref{l1} we have the following relations
$$
a_{23}^{-\varepsilon} a_{13} a_{23}^{\varepsilon} = (a_{12} a_{13})^{\varepsilon} a_{13} (a_{12} a_{13})^{-\varepsilon},~~
a_{24}^{-\varepsilon} a_{14} a_{24}^{\varepsilon} = (a_{12} a_{14})^{\varepsilon} a_{14} (a_{12} a_{14})^{-\varepsilon}.
$$
Hence, $P_{2,2}$ contains not only $V_{2,1}$ but its normal closure in $P_{2,2}$ and we get the horizontal decomposition
$$
P_{2,2} = \overline{V}_{2,1} \leftthreetimes (V_2 \leftthreetimes V_3),
$$
where $\overline{V}_{2,1} = \langle V_{2,1} \rangle^{P_{2,2}}$ is the normal closure of $V_{2,1}$ in $P_{2,2}$.
\end{example}

\medskip

In the general case, let $\overline{V}_{g,i}$
be the normal closure of $V_{g,i}$ in the subgroup $\langle V_{g,i}, V_{g,i+1}, \ldots, V_{g,g}, \widetilde{P}_{n} \rangle$, i.e.
$$
\overline{V}_{g,i} = \langle V_{g,i} \rangle^{\langle V_{g,i}, V_{g,i+1}, \ldots, V_{g,g}, \widetilde{P}_{n} \rangle}.
$$

\begin{lemma}\label{l2}
1) $\overline{V}_{g,g} = V_{g,g} \cong F_n.$

2) $\overline{V}_{g,i}$ is a subgroup of $V_i$ for every $i = 1, 2, \ldots, g$ and, in particular, is a free group.
\end{lemma}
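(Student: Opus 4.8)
The plan is to reduce both statements to the horizontal decomposition of $P_{g+n}$ coming from Lemma~\ref{l1}, together with two elementary inclusions of generating sets; no direct conjugation computation is then needed.

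First I would record the inclusions. For each $j$ with $i \leq j \leq g$ the generators of $V_{g,j} = \langle a_{j,g+1}, \ldots, a_{j,g+n} \rangle$ form a subset of the generators of $V_j = \langle a_{j,j+1}, \ldots, a_{j,g+n} \rangle$, so $V_{g,j} \subseteq V_j$; moreover $V_{g,g} = V_g$, since in the $g$th row no generator is omitted. Likewise $\widetilde{P}_n$ is generated by the $a_{pq}$ with $g+1 \leq p < q \leq g+n$, whence $\widetilde{P}_n \subseteq \langle V_{g+1}, \ldots, V_{g+n-1} \rangle$. Writing $G_i = \langle V_{g,i}, V_{g,i+1}, \ldots, V_{g,g}, \widetilde{P}_n \rangle$ and $H_i = \langle V_i, V_{i+1}, \ldots, V_{g+n-1} \rangle$, these inclusions combine to give $G_i \subseteq H_i$.

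Next I would invoke the horizontal decomposition. Reading the nesting $P_{g+n} = V_1 \leftthreetimes (V_2 \leftthreetimes (\cdots))$ at the $i$th level shows $H_i = V_i \leftthreetimes \langle V_{i+1}, \ldots, V_{g+n-1} \rangle$, so $V_i \trianglelefteq H_i$. The only subtle point, and the thing to get right, is that $V_i$ is normal in $H_i$ but \emph{not} in all of $P_{g+n}$ (only $V_1$ is normal in the whole group); this is precisely why the definition of $\overline{V}_{g,i}$ forms the normal closure inside $G_i$ rather than inside $P_{g,n}$. Granting this, every generator of $\overline{V}_{g,i} = \langle V_{g,i} \rangle^{G_i}$ has the form $x\,w\,x^{-1}$ with $x \in G_i \subseteq H_i$ and $w \in V_{g,i} \subseteq V_i$; since $V_i \trianglelefteq H_i$ we get $x\,w\,x^{-1} \in V_i$. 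Hence $\overline{V}_{g,i} \subseteq V_i$, which is statement~2), and as a subgroup of the free group $V_i$ it is free by the Nielsen--Schreier theorem.

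Finally, for statement~1) I would specialize to $i = g$. Here $V_{g,g} = V_g$, so the inclusion just proved reads $\overline{V}_{g,g} \subseteq V_g = V_{g,g}$; together with the trivial reverse inclusion $V_{g,g} \subseteq \overline{V}_{g,g}$ this forces $\overline{V}_{g,g} = V_{g,g}$. Since $V_g = \langle a_{g,g+1}, \ldots, a_{g,g+n} \rangle$ is free of rank $(g+n)-g = n$, I conclude $\overline{V}_{g,g} \cong F_n$. Thus both parts follow formally once the containment $G_i \subseteq H_i$ and the level-$i$ normality $V_i \trianglelefteq H_i$ are in place.
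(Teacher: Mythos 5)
Your proof is correct and takes essentially the same route as the paper: the paper's (much terser) argument also deduces $\overline{V}_{g,i} \leq V_i$ from the conjugation rules of Lemma~\ref{l1} --- which is exactly what your inclusion $G_i \subseteq H_i$ together with the normality $V_i \trianglelefteq H_i$ makes precise --- and obtains freeness because a subgroup of a free group is free. Your derivation of part~1) from part~2) via the two inclusions $\overline{V}_{g,g} \subseteq V_g = V_{g,g} \subseteq \overline{V}_{g,g}$ is just a harmless reorganization of the paper's observation that $V_g$ is normal in $\langle V_{g,g}, \widetilde{P}_n \rangle$.
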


\begin{proof}
1) We see that $V_{g,g} = V_g$ and from Lemma \ref{l1} it follows that $V_g$ is normal in $\langle V_{g,g}, \widetilde{P}_{n} \rangle$.

2) The fact that $\overline{V}_{g,i}$ is a subgroup of $V_i$ follows from the conjugation rules of Lemma \ref{l1}. The fact that $\overline{V}_{g,i}$ is free follows from the fact that $V_i$ is free.
\end{proof}

Since $\widetilde{P}_n \cong P_n$, it has the horizontal decomposition:
$$
\widetilde{P}_n = V_{g+1} \leftthreetimes (V_{g+2} \leftthreetimes ( \ldots  \leftthreetimes (V_{g+n-2} \leftthreetimes V_{g+n-1}) \ldots )).
$$

Using Lemma \ref{l2}, one can construct the horizontal decomposition of $P_{g,n}$.

\begin{theorem} The group $P_{g,n}$ is the semi-direct products of  groups:
$$
P_{g,n} = \overline{V}_{g,1} \leftthreetimes (\overline{V}_{g,2} \leftthreetimes ( \ldots  \leftthreetimes (\overline{V}_{g,g}  \leftthreetimes \widetilde{P}_n) \ldots )).
$$
\end{theorem}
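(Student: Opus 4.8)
The plan is to establish the decomposition one row at a time by a downward induction on the row index, realizing each semidirect factor as a normal complement that is controlled from inside the ambient group $P_{g+n}$. For $i = 1, \ldots, g$ set
$$
G_i = \langle V_{g,i}, V_{g,i+1}, \ldots, V_{g,g}, \widetilde{P}_n \rangle, \qquad G_{g+1} = \widetilde{P}_n,
$$
so that $G_1 = P_{g,n}$ (the generators in the table above are exactly those of $V_{g,1}, \ldots, V_{g,g}$ together with those of $\widetilde{P}_n$), and $\overline{V}_{g,i} = \langle V_{g,i}\rangle^{G_i}$ by definition. The theorem follows once I prove, for each $i = 1, \ldots, g$, the internal semidirect decomposition $G_i = \overline{V}_{g,i} \leftthreetimes G_{i+1}$; composing these from $i=g$ down to $i=1$, with the base step $G_g = \overline{V}_{g,g} \leftthreetimes \widetilde{P}_n = V_{g,g} \leftthreetimes \widetilde{P}_n$ coming from the first part of Lemma~\ref{l2}, yields the stated iterated product.

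To verify $G_i = \overline{V}_{g,i} \leftthreetimes G_{i+1}$ I must check three things: that $\overline{V}_{g,i}$ is normal in $G_i$, that $\overline{V}_{g,i}\, G_{i+1} = G_i$, and that $\overline{V}_{g,i} \cap G_{i+1} = 1$. The first is immediate, since $\overline{V}_{g,i}$ is by construction the normal closure of $V_{g,i}$ in $G_i$; in particular $G_{i+1} \leq G_i$ normalizes $\overline{V}_{g,i}$, which is what makes the conjugation action of $G_{i+1}$ on $\overline{V}_{g,i}$ well defined. The second is also routine: $G_i = \langle V_{g,i}, G_{i+1}\rangle$ and $V_{g,i} \subseteq \overline{V}_{g,i}$, so $G_i = \langle \overline{V}_{g,i}, G_{i+1}\rangle$, and because $\overline{V}_{g,i}$ is normal the set $\overline{V}_{g,i}\, G_{i+1}$ is already a subgroup, hence equals $G_i$.

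The heart of the argument is the trivial intersection $\overline{V}_{g,i} \cap G_{i+1} = 1$, and for this I pass to the ambient group $P_{g+n}$. Write $W_k = \langle V_k, V_{k+1}, \ldots, V_{g+n-1}\rangle$ for the tail subgroups of $P_{g+n}$. The conjugation rules of Lemma~\ref{l1} give the horizontal decomposition $P_{g+n} = V_1 \leftthreetimes (V_2 \leftthreetimes (\cdots \leftthreetimes V_{g+n-1})\cdots)$, whose defining property is precisely that $W_k = V_k \leftthreetimes W_{k+1}$, so that $V_k \cap W_{k+1} = 1$ for every $k$. By the second part of Lemma~\ref{l2} one has $\overline{V}_{g,i} \leq V_i$, while $G_{i+1} \leq W_{i+1}$ because every generator of $G_{i+1}$ lies in some $V_j$ with $j \geq i+1$: indeed $V_{g,j} \leq V_j$ for $i+1 \leq j \leq g$, and $\widetilde{P}_n = \langle V_{g+1}, \ldots, V_{g+n-1}\rangle \leq W_{i+1}$ since $i \leq g$. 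Therefore
$$
\overline{V}_{g,i} \cap G_{i+1} \leq V_i \cap W_{i+1} = 1,
$$
as required.

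The only genuine subtlety, and the step I expect to demand the most care, is exactly this comparison with $P_{g+n}$: one must be certain that $\overline{V}_{g,i}$, which is a normal closure taken \emph{inside} $P_{g,n}$ rather than inside $P_{g+n}$, still lands in $V_i$ (this is what the second part of Lemma~\ref{l2} secures), and that $G_{i+1}$ is confined to the tail $W_{i+1}$. Once both containments are in place, the triviality of the intersection is inherited for free from the clean splitting of $P_{g+n}$, and the iterated semidirect decomposition of $P_{g,n}$ assembles immediately.
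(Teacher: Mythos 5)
Your proof is correct, but it establishes the splittings by a mechanism genuinely different from the paper's. The paper argues by induction on $g$: it introduces a retraction homomorphism $P_{g,n} \to P_{g-1,n}$ killing the generators of $V_{g,1}$, identifies its kernel with the normal closure $\overline{V}_{g,1} = \langle V_{g,1}\rangle^{P_{g,n}}$ (free, since it lies in the normal subgroup $V_1$ of $P_{g+n}$), concludes $P_{g,n} = \overline{V}_{g,1} \leftthreetimes P_{g-1,n}$, and invokes the induction hypothesis. You never construct any homomorphism: for each $i$ you check the three conditions for the internal semidirect product $G_i = \overline{V}_{g,i} \leftthreetimes G_{i+1}$ directly, with normality and generation formal, and with the trivial intersection imported from the ambient group via $\overline{V}_{g,i} \leq V_i$ (Lemma \ref{l2}), $G_{i+1} \leq W_{i+1}$, and $V_i \cap W_{i+1} = 1$, the last being exactly the meaning of the horizontal decomposition of $P_{g+n}$. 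What this buys you is self-containedness at the point where the paper is loosest: the paper's argument tacitly assumes that the generator-killing map $P_{g,n} \to P_{g-1,n}$ is well defined and that its kernel is precisely the normal closure of $V_{g,1}$ (a presentation-level fact that is asserted, not proved), whereas your intersection argument inherits the splitting from the already-established decomposition of $P_{g+n}$ for free. What the paper's route buys in exchange is brevity and an explicit structural map $P_{g,n} \to P_{g-1,n}$ that fits the inductive framework used elsewhere in the paper; both proofs share the same overall shape of peeling off one row of generators at a time.
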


\begin{proof}
Use induction on $g$. If $g=1$, then $P_{1,n} = P_{n+1}$ and the horizontal decomposition for $P_{n+1}$ gives the horizontal decomposition:  $P_{1,n} = V_{g} \leftthreetimes \widetilde{P}_n$. As follows from Lemma \ref{l2}, $V_g = \overline{V}_{g,g}$.

Let $g > 1$. Define a homomorphism of $P_{g,n}$ onto the group $P_{g-1,n}$ which sends all generators of $V_{g,1}$ to the unit and keeps all other generators. The kernel of this homomorphism  is the normal closure of $V_{g,1}$ in $P_{g,n}$. Denote this kernel by $\overline{V}_{g,1} = \langle V_{g,1} \rangle^{P_{g,n}}$. Since, $V_{g,1}$ is a subgroup in $V_1$ and $V_1$ is normal in $P_{g+n}$ we get that $\overline{V}_{g,1}$ lies in $V_1$ and hence is a free. We have the decomposition $P_{g,n} = \overline{V}_{g,1} \leftthreetimes P_{g-1,n}$. Using the induction hypothesis we get the required decomposition.
\end{proof}

\section{The kernel of the epimorphism $B_{g,n} \to \widetilde{B}_n$}

As was noted in \cite{V} there is an epimorphism
$$
\varphi_n : B_{g,n} \longrightarrow  \widetilde{B}_n,
$$
where the subgroup $\widetilde{B}_n = \langle \sigma_{g+1}, \sigma_{g+2}, \ldots, \sigma_{g+n-1} \rangle$  is isomorphic to $B_n$.
This endomorphism is defined by the rule
$$
\varphi_n(\tau_k) = 1,~k = 1, 2, \ldots, g,~~~\varphi_n(\sigma_i) = \sigma_i,~i = g+1, g+2, \ldots, g+n-1.
$$
If we denote $R_{g,n} = \mathrm{Ker}(\varphi_n)$, then $B_{g,n} = R_{g,n} \leftthreetimes \widetilde{B}_n$.
 The  purpose of this section is the description of the group $R_{g,n}$.

Considering the table  with the generators of $P_{g,n}$ (see Section 3), we see that all generators, which lie in the fist $g$ rows  of this table, are elements of $R_{g,n}$.
Denote by $Q_{g,n}$ the subgroup of $R_{g,n}$ that is generated by these elements, i.e.
 $Q_{g,n} = \langle V_{g,1}, V_{g,2}, \ldots, V_{g,g} \rangle$. Then
$R_{g,n} = \langle Q_{g,n} \rangle^{B_{g,n}}$
is the normal closure of $Q_{g,n}$ in $B_{g,n}$.

On the other side, if we denote
$$
U_{g,g+i} = \langle a_{1,g+i}, a_{2,g+i}, \ldots, a_{g,g+i} \rangle \leq U_{g+i},~~~i = 1, 2, \ldots, n,
$$
then $Q_{g,n} = \langle U_{g,g+1}, U_{g,g+2}, \ldots, U_{g,g+n} \rangle$. Note that $U_{g,g+1} = U_{g+1}$.
Let $\overline{U}_{g,g+i}$ be the normal closure of $U_{g,g+i}$ in $U_{g+i}$, $i = 1, 2, \ldots, n$.  In this notations it holds:

\begin{theorem}
The group $R_{g,n}$ has the following decompositions:

\medskip

1)~ $R_{g,n} = \overline{U}_{g,g+n} \leftthreetimes (\overline{U}_{g,g+n-1} \leftthreetimes ( \ldots \leftthreetimes (\overline{U}_{g,g+2} \leftthreetimes \overline{U}_{g,g+1})\ldots )),$\\
where $\overline{U}_{g,g+1} = U_{g+1}$.

\medskip

2)~ $R_{g,n} = \overline{V}_{g,1} \leftthreetimes (\overline{V}_{g,2} \leftthreetimes ( \ldots \leftthreetimes (\overline{V}_{g,g-1} \leftthreetimes \overline{V}_{g,g})\ldots )),$\\
where $\overline{V}_{g,g} = V_g$.
\end{theorem}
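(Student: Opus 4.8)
The plan is to realize $R_{g,n}$ as the kernel of a retraction of $P_{g,n}$ onto $\widetilde{P}_n$ and to read off both decompositions from the vertical decomposition of Section~3 and the horizontal decomposition of the previous Theorem. I would start from three observations. Since the standard projection $\pi_n\colon \widetilde{B}_n \to S_n$ satisfies $\psi_n = \pi_n \circ \varphi_n$, we get $R_{g,n} = \mathrm{Ker}(\varphi_n) \subseteq \mathrm{Ker}(\psi_n) = P_{g,n}$. Moreover $\varphi_n$ restricts to the identity on $\widetilde{P}_n$ while, as already noted before the theorem, it kills every generator $a_{k,g+j}$ with $k \le g$ (these lie in $R_{g,n}$); hence $\varphi_n$ maps $P_{g,n}$ onto $\widetilde{P}_n$ and $P_{g,n} = R_{g,n} \leftthreetimes \widetilde{P}_n$. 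Finally, on a vertical factor $\varphi_n$ restricts to the retraction $U_{g+i} \to \widetilde{U}_i$ that kills the generators $a_{1,g+i}, \ldots, a_{g,g+i}$ of $U_{g,g+i}$ and fixes the remaining ones; as $U_{g+i}$ is free, the kernel of such a retraction is precisely the normal closure of $U_{g,g+i}$ in $U_{g+i}$, so $\overline{U}_{g,g+i} = \mathrm{Ker}(\varphi_n|_{U_{g+i}})$.

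For part~1 I would argue by induction on $n$ using the vertical splitting $P_{g,n} = U_{g+n} \leftthreetimes P_{g,n}^{<}$, where $P_{g,n}^{<} = U_{g+n-1} \leftthreetimes (\ldots \leftthreetimes U_{g+1}) \cong P_{g,n-1}$ carries the restriction of $\varphi_n$ as its own $\varphi_{n-1}$. Writing $x \in R_{g,n}$ as $x = u\,p$ with $u \in U_{g+n}$ and $p \in P_{g,n}^{<}$ and applying $\varphi_n$, the images land in $\widetilde{U}_n$ and $\widetilde{P}_{n-1}$ respectively; because $\widetilde{P}_n = \widetilde{U}_n \leftthreetimes \widetilde{P}_{n-1}$ is itself a semi-direct product, $\varphi_n(x)=1$ forces $\varphi_n(u)=1$ and $\varphi_n(p)=1$ separately. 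Hence $u \in \overline{U}_{g,g+n}$ and $p \in \mathrm{Ker}(\varphi_n|_{P_{g,n}^{<}})$; since $\overline{U}_{g,g+n} = R_{g,n}\cap U_{g+n}$ is normal in $R_{g,n}$ and meets $P_{g,n}^{<}$ trivially, this gives $R_{g,n} = \overline{U}_{g,g+n} \leftthreetimes \mathrm{Ker}(\varphi_n|_{P_{g,n}^{<}})$, and the induction hypothesis expands the second factor into $\overline{U}_{g,g+n-1} \leftthreetimes \cdots \leftthreetimes \overline{U}_{g,g+1}$, with base case $R_{g,1} = U_{g+1} = \overline{U}_{g,g+1}$.

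Part~2 is essentially a reading of the horizontal decomposition $P_{g,n} = \overline{V}_{g,1} \leftthreetimes (\ldots \leftthreetimes (\overline{V}_{g,g} \leftthreetimes \widetilde{P}_n)\ldots)$. By Lemma~\ref{l2} each $\overline{V}_{g,i}$ with $1 \le i \le g$ is normally generated by $V_{g,i}$, whose generators $a_{i,g+j}$ have first index $\le g$ and are therefore killed by $\varphi_n$; as the image of a normal closure is the normal closure of the image, $\varphi_n(\overline{V}_{g,i}) = 1$. Thus $\varphi_n$ annihilates every upper factor and is the identity on the bottom factor $\widetilde{P}_n$, so it coincides with the canonical retraction of this semi-direct product onto $\widetilde{P}_n$; its kernel is exactly the complementary factor. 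Hence $R_{g,n} = \overline{V}_{g,1} \leftthreetimes (\ldots \leftthreetimes (\overline{V}_{g,g-1} \leftthreetimes \overline{V}_{g,g})\ldots)$, with $\overline{V}_{g,g} = V_g$ by Lemma~\ref{l2}.

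The step I expect to be the main obstacle is the compatibility in part~1: verifying that passing to the kernel respects the iterated semi-direct product, i.e. that $R_{g,n}$ acquires no ``mixed'' element spread across several vertical levels. This is precisely what is secured by the fact that the target $\widetilde{P}_n$ is \emph{itself} the semi-direct product of the $\widetilde{U}_i$, so a product $u\,p$ can be killed only when each graded piece is killed; everything else (the free-group identification of $\overline{U}_{g,g+i}$ as a retraction kernel, the triviality of the relevant intersections, and the normality of the successive factors) is routine and inherited from the corresponding statements for $P_{g,n}$.
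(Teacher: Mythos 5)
Your proof is correct, and it rests on the same two pillars as the paper's: the vertical decomposition of $P_{g,n}$ for part~1, the horizontal decomposition from the preceding theorem for part~2, and in both cases the identification of the relevant normal closure inside a free group as the kernel of a retraction. The organization, however, is genuinely different. The paper never reduces to $P_{g,n}$: it treats $R_{g,n}$ as the normal closure of $Q_{g,n}$ in all of $B_{g,n}$, so it has to track conjugation by the $\sigma_{g+k}$ and by the Schreier representatives $\Lambda_n$, and its part~1 is an explicit normal-form computation --- write $h = u_{g+1}\cdots u_{g+n}\alpha$, factor each $u_k = \overline{u}_k w_k$ through the projection $\pi_k$, and shift the $w_k$ to the right to exhibit the kernel part of $h$. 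Your opening reduction ($\psi_n = \pi_n\circ\varphi_n$, hence $R_{g,n}\subseteq P_{g,n}$, and $\varphi_n|_{P_{g,n}}$ is a retraction onto $\widetilde{P}_n$) removes the symmetric-group bookkeeping altogether, and your induction on $n$ replaces the shifting computation by the observation that, the target being itself the semi-direct product $\widetilde{U}_n \leftthreetimes \widetilde{P}_{n-1}$, a product $u\,p$ can map to $1$ only if each graded piece does. Likewise in part~2 the paper proves two inclusions (with $R_{g,n}\subseteq G$ argued rather sketchily via conjugation of generators of $Q_{g,n}$ using Lemma~\ref{l1}), whereas you identify $\varphi_n$ with the canonical retraction of the horizontal decomposition onto its bottom factor $\widetilde{P}_n$, so that the kernel is the complementary factor by uniqueness of normal forms. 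The trade-off: the paper's hands-on computation yields an explicit recipe for splitting any braid in $B_{g,n}$ into kernel and image parts, which it reuses later (e.g.\ for the normal form in $G_{g,n}$), while your argument gets normality, trivial intersections, and the iterated splitting for free from the already-established decompositions of $P_{g,n}$.
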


\begin{proof}
As we know, $R_{g,n}$ is the normal closure of $Q_{g,n}$ in $B_{g,n}$. To find this closure, at first,  consider  conjugations of the generators of $Q_{g,n}$ by $\sigma_{g+k}$, $k = 1, 2, \ldots, n-1$.

Conjugating generators from the $k$th column  of our table  by $\sigma_{g+k}$,  we have
$$
\sigma_{g+k}^{-1} a_{i,g+k} \sigma_{g+k} = a_{i,g+k} a_{i,g+k+1} a_{i,g+k}^{-1},~~i = 1, 2, \ldots, g.
$$

Conjugating generators from the $(k+1)$st column  of our table  by $\sigma_{g+k}$, we have
$$
\sigma_{g+k}^{-1} a_{i,g+k+1} \sigma_{g+k} =  a_{i,g+k},~~i = 1, 2, \ldots, g.
$$

Generators from all other columns commute with $\sigma_{g+k}$.

Hence,
for every generator $a_{ij}$ of $Q_{g,n}$ and every $\sigma_k$, $k = g+1, g+2, \ldots, g+n-1$, the element
$\sigma_k^{-1} a_{ij} \sigma_k$ lies in $Q_{g,n}$.

For the group $\widetilde{B}_n$ there exists the following exact sequence
$$
1 \longrightarrow \widetilde{P}_n \longrightarrow \widetilde{B}_n \longrightarrow S_n \longrightarrow 1.
$$
Let $m_{kl} = \sigma_{k-1}  \, \sigma_{k-2} \ldots \sigma_l$ for $l < k$ and $m_{kl} = 1$
in other cases. Then the set
$$
\Lambda_n = \left\{ \prod\limits_{k=g+2}^{g+n} m_{k,j_k}~ \vert ~1 \leq j_k
\leq k \right\}
$$
is a Schreier set of coset representatives of $\widetilde{P}_n$ in $\widetilde{B}_n$.

From the previous observations it follows that if $\alpha \in \Lambda$, then for every generator $a_{ij}$ of $Q_{g,n}$ the element
$\alpha^{-1} a_{ij} \alpha$ lies in $Q_{g,n}$.

Now we will consider the conjugations of the generators of $Q_{g,n}$ by the generators of $P_{g,n}$.

1) To prove the first decomposition, take  some element $h \in B_{g,n}$ and using the vertical decomposition, write it in the normal form:
$$
h = u_{g+1} u_{g+2} \ldots u_{g+n} \alpha,~\mbox{where}~u_k \in U_k,~~\alpha \in \Lambda_n.
$$
In every group $U_k$, $k = g+1, g+2, \ldots, g+n$, we have two subgroups:
$$
U_{g,k} = \langle a_{1k}, a_{2k}, \ldots, a_{gk} \rangle,~~\widetilde{U}_{k} = \langle a_{g+1,k}, a_{g+2,k}, \ldots, a_{k-1,k} \rangle \leq \widetilde{P}_n.
$$
We see that $U_{g,k}$ is a subgroup of $P_{g,n}$,  $\widetilde{U}_{k}$ is a subgroup of  $\widetilde{P}_{n}$ and $U_k = \langle U_{g,k}, \widetilde{U}_{k} \rangle$ is a free group. Define the projection $\pi_k : U_k \longrightarrow \widetilde{U}_k$ by the rules
$$
\pi_k(a_{ik}) = 1,~~i = 1, 2, \ldots, g;
$$
$$
\pi_k(a_{jk}) = a_{jk},~~j = g+1, g+2, \ldots, k-1.
$$
The kernel $\mathrm{Ker}(\pi_k)=\overline{U}_{g,k}$ is the normal closure $\langle U_{g,k} \rangle^{U_k}$ of $U_{g,k}$ into $U_k$. Hence,  element $u_k$ can be written  in the form
$u_k = \overline{u}_k \pi_k(u_k)$ for some $\overline{u}_k \in \overline{U}_{g,k}$. Denote for simplicity $w_{g+i} = \pi_{g+1}(u_{g+i})$, we can rewrite $h$ in the form
$$
h = (\overline{u}_{g+1} w_{g+1}) (\overline{u}_{g+2} w_{g+2}) \ldots (\overline{u}_{g+n} w_{g+n}) \alpha.
$$
Shifting all $w_{g+i}$ to the right we get
$$
h = \overline{u}_{g+1} \overline{u}_{g+2}^{w_{g+1}^{-1}}  \ldots \overline{u}_{g+n}^{w_{g+n-1}^{-1} w_{g+n-2}^{-1} \ldots w_{g+1}^{-1}} w_{g+1} w_{g+2} \ldots w_{g+n} \alpha.
$$

The image $\varphi_n(h) = w_{g+1} w_{g+2} \ldots w_{g+n} \alpha$. Hence, the element
$$
\overline{u}_{g+1} \, \overline{u}_{g+2}^{w_{g+1}^{-1}} \,  \ldots \, \overline{u}_{g+n}^{w_{g+n-1}^{-1} w_{g+n-2}^{-1} \ldots w_{g+1}^{-1}}
$$
lies in the kernel $\mathrm{Ker}(\varphi_n)=R_{g,n}$. We proved that any element from $R_{g,n}$ lies in the product
$$
\overline{U}_{g,g+n} \leftthreetimes (\overline{U}_{g,g+n-1} \leftthreetimes ( \ldots \leftthreetimes (\overline{U}_{g,g+2} \leftthreetimes \overline{U}_{g,g+1})\ldots )).
$$
On the other side, this product contains $Q_{g,n}$ and is normal in $B_{g,n}$. Hence, $R_{g,n}$ has the required decomposition.

2) Prove the second decomposition. Denote $G = \overline{V}_{g,1} \leftthreetimes (\overline{V}_{g,2} \leftthreetimes ( \ldots \leftthreetimes (\overline{V}_{g,g-1} \leftthreetimes \overline{V}_{g,g})\ldots ))$ the group from the right side of the decomposition of $R_{g,n}$.
Take an arbitrary element $h \in B_{g,n}$. By the theorem on the horizontal decomposition of $B_{g,n}$ it has the following normal form:
$$
h = v_1 v_2 \ldots v_g b, ~\mbox{where}~v_i \in V_{g,i}, b \in \widetilde{B}_n
$$
and $v_1 v_2 \ldots v_g$ lies in $R_{g,n}$.
Under the endomorphism $\varphi_n : B_{g,n} \longrightarrow \widetilde{B}_n$ element $h$ goes to the element $b$. Hence,
$G$ lies in $R_{g,n}$.

On the other side we must prove that $R_{g,n}$ lies in $G$. We know that $R_{g,n}$ is the normal closure of $Q_{g,n}$ in $B_{g,n}$. As was shown before, if $\alpha \in \Lambda_n$ is a coset representative of $\widetilde{P}_n$ into $\widetilde{B}_n$ and $a_{ij}$ is some generator of $Q_{g,n}$, then
$a_{ij}^{\alpha}$ lies in $Q_{g,n}$. Considering the conjugation of $a_{ij} \in Q_{g,n}$ by  generators of $P_{g,n}$ and using the Lemma \ref{l1} we get  an element which lies in $G$.
\end{proof}

In the case $g=1$ we have $P_{1,n} = P_{n+1}$ and  $R_{1,n} = \langle a_{12}, a_{13},$ $\ldots,$ $a_{1,n+1} \rangle \cong F_n$ and we get the decomposition which was found in \cite{L2}.

\begin{corollary}
$B_{1,n} \cong F_n \leftthreetimes B_n$.
\end{corollary}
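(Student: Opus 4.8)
The plan is to obtain the corollary by specializing, to $g=1$, the two facts already established for general $g$ and then simply composing them. First I would recall that the decomposition $B_{g,n} = R_{g,n} \leftthreetimes \widetilde{B}_n$ holds by the very definition of $R_{g,n} = \mathrm{Ker}(\varphi_n)$ together with the splitting furnished by the inclusion $\widetilde{B}_n \hookrightarrow B_{g,n}$, on which $\varphi_n$ restricts to the identity. Setting $g=1$ this gives at once the semi-direct product $B_{1,n} = R_{1,n} \leftthreetimes \widetilde{B}_n$. Since $\widetilde{B}_n \cong B_n$ was observed in Section~3, the only remaining task is to identify $R_{1,n}$.

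To identify $R_{1,n}$, I would invoke part~2) of the theorem on the decomposition of $R_{g,n}$,
$$
R_{g,n} = \overline{V}_{g,1} \leftthreetimes (\overline{V}_{g,2} \leftthreetimes ( \ldots \leftthreetimes (\overline{V}_{g,g-1} \leftthreetimes \overline{V}_{g,g})\ldots )), \qquad \overline{V}_{g,g} = V_g.
$$
For $g=1$ this iterated product collapses to the single factor $\overline{V}_{1,1} = V_1 = \langle a_{12}, a_{13}, \ldots, a_{1,n+1} \rangle$, which is free of rank $(g+n)-1 = n$. Hence $R_{1,n} = V_1 \cong F_n$. As a cross-check one can run the same specialization through part~1): there $\overline{U}_{1,2} = U_2$, and since $R_{1,n}$ is the normal closure of $Q_{1,n} = V_1$, a subgroup already normal in $P_{1,n} = P_{n+1}$ by the horizontal decomposition and invariant under the $\sigma_i$-conjugations recorded in the theorem's proof, one again reaches $R_{1,n} = V_1 \cong F_n$. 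Combining the two ingredients then yields $B_{1,n} \cong R_{1,n} \leftthreetimes \widetilde{B}_n \cong F_n \leftthreetimes B_n$.

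The step requiring the most care—though it is genuinely routine here—is the bookkeeping involved in reading off the degenerate, single-factor case of the theorem: confirming that for $g=1$ the nested semi-direct product really reduces to exactly one free factor, and that this factor $V_1$ has rank $n$ rather than some adjacent value. Everything else is a direct substitution into statements already proved, so no new estimates or constructions are needed.
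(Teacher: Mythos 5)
Your proposal is correct and follows essentially the same route as the paper: the paper also obtains the corollary by combining $B_{g,n} = R_{g,n} \leftthreetimes \widetilde{B}_n$ with the specialization $g=1$ of the theorem on $R_{g,n}$, noting that $P_{1,n} = P_{n+1}$ and $R_{1,n} = V_1 = \langle a_{12}, a_{13}, \ldots, a_{1,n+1} \rangle \cong F_n$. Your rank bookkeeping ($V_1$ free of rank $(g+n)-1 = n$) and the cross-check via the normality of $V_1$ in $B_{1,n}$ are both consistent with the paper's argument.
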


\section{Some analog of the Hecke algebra for the braid group in the handlebody}

Let $q$ be some complex number. Recall that the Hecke algebra $H_{n}(q)$ is an associative $\mathbb{C}$-algebra with unit, which is generated by elements
$$
s_1, s_2, \ldots, s_{n-1},
$$
and is defined by the relations
$$
\begin{array}{ll}
s_i s_j = s_j s_i, & ~\mbox{for}~|i - j| > 1, \\
s_i s_{i+1} s_i = s_{i+1} s_i s_{i+1}, & ~\mbox{for}~1 \leq  i \leq n-2, \\
s_i^2 = (q-1) s_i + q, & ~\mbox{for}~i = 1, \ldots, n-1.
\end{array}
$$

The  algebra  $H_{n}(q)$ has the following linear basis:
$$
S = \{ (s_{i_1} s_{i_1-1} \ldots s_{i_1-k_1}) (s_{i_2} s_{i_2-1} \ldots s_{i_2-k_2}) \ldots (s_{i_p} s_{i_p-1} \ldots s_{i_p-k_p}) \}
$$
for $1 \leq i_1 < \ldots < i_p \leq n-1$. The basis $S$ is used in the construction of the Markov trace, leading to the HOMFLYPT or $2$-variable Jones polynomial (see \cite{J}).

The braid group in the solid torus $B_{1,n}$ is the Artin group of the Coxeter group of type $\mathcal{B}$, which is related to the Hecke algebra of type $\mathcal{B}$.
The generalized Hecke algebra of type $\mathcal{B}$, $H_{1,n}(q)$ is defined by S. Lambropoulou in \cite{L}. $H_{1,n}(q)$ is isomorphic to the affine Hecke algebra of type $\mathcal{A}$, $\widetilde{H}_{n}(q)$. A unique Markov trace is constructed on the algebras $H_{1,n}(q)$ that leads to an invariant for links in the solid torus, the universal analogue of the HOMFLYPT polynomial for the solid torus.

The algebra $H_{1,n}(q)$ is generated by elements
$$
t, t^{-1}, s_1, s_2, \ldots, s_{n-1},
$$
and is defined by the relations
$$
\begin{array}{ll}
s_i s_j = s_j s_i, & ~\mbox{for}~|i - j| > 1, \\
s_i s_{i+1} s_i = s_{i+1} s_i s_{i+1}, & ~\mbox{for}~1 \leq  i \leq n-2, \\
s_i^2 = (q-1) s_i + q, & ~\mbox{for}~i = 1, 2, \ldots, n-1,\\
s_1 t s_1 t  = t s_1 t s_1, &  \\
t s_i = s_i t, & ~\mbox{for}~i > 1.
\end{array}
$$
Hence,
$$
H_{1,n}(q) = \frac{\mathbb{C} [B_{1,n}]}{\langle \sigma_i^2 - (q-1) \sigma_i - q \rangle}.
$$
Note that in $H_{1,n}(q)$ the generator $t$ satisfies no polynomial relation, making the algebra $H_{1,n}(q)$ infinite dimensional.
If we set $t=0$ in $H_{1,n}(q)$, we obtain the Hecke algebra $H_n(q)$.

In $H_{1,n}(q)$ are defined in \cite{L2} the elements
$$
t_i = s_i s_{i-1} \ldots s_1 t s_1 \ldots s_{i-1} s_i,~~~t'_i = s_i s_{i-1} \ldots s_1 t s_1^{-1} \ldots s_{i-1}^{-1} s_i^{-1}.
$$
It was then proved that the following sets form linear bases for $H_{1,n}(q)$:
$$
\Sigma_n = t^{k_1}_{i_1} t^{k_2}_{i_2} \ldots t^{k_r}_{i_r} \sigma, ~\mbox{where}~1 \leq i_1 < \ldots < i_r \leq n-1,
$$
$$
\Sigma'_n = (t_{i_1}')^{k_1} (t_{i_2}')^{k_2} \ldots (t_{i_r}')^{k_r} \sigma, ~\mbox{where}~1 \leq i_1 < \ldots < i_r \leq n-1,
$$
where $k_1, k_2, \ldots, k_r \in \mathbb{Z}$ and $\sigma$ a basis element in $H_{n}(q)$. The basis $\Sigma'_n$ is used in \cite{L, L2} for constructing a Markov trace on $\bigcup_{n=1}^{\infty} H_{1,n}(q)$ and a universal HOMFLYPT-type invariant for oriented links in the solid torus.

\begin{definition} Let $q \in \mathbb{C}$. The algebra $H_{g,n}(q)$ is an associative algebra over $\mathbb{C}$ with unit that is generated by
$$
t_1^{\pm 1}, t_2^{\pm 1}, \ldots, t_g^{\pm 1}, s_{g+1}, s_{g+2}, \ldots s_{g+n-1}
$$
and is defined by the following relations
$$
\begin{array}{ll}
s_i s_j = s_j s_i, & ~\mbox{for}~|i - j| >1, \\
s_i s_{i+1} s_i = s_{i+1} s_i s_{i+1}, & ~\mbox{for}~i = g+1, g+2, \ldots, g+n-2, \\
s_i^2 = (q-1) s_{i} + q, & ~\mbox{for}~i = g+1, g+2, \ldots, g+n-1, \\
t_k s_i = s_i t_k, & ~\mbox{for}~k \geq 1, i \geq g+2, \\
t_k (s_{g+1} t_k s_{g+1}) = (s_{g+1} t_k s_{g+1}) t_{k}, & ~\mbox{for}~k = 1, 2, \ldots, g, \\
t_k (s_{g+1} t_{k+l} s_{g+1}) = (s_{g+1} t_{k+l} s_{g+1}) t_{k}, & ~\mbox{for}~k = 1, 2, \ldots, g-1; l = 1, 2, \ldots, g-k.
\end{array}
$$
\end{definition}

\begin{remark}
More natural to consider the generators $s_{1}, s_{2}, \ldots s_{n-1}$ instead $s_{g+1}, s_{g+2}, \ldots s_{g+n-1}$, but for technical reasons we will use our notation.
\end{remark}

We see that
$$
H_{g,n}(q) = \frac{\mathbb{C} [B_{g,n}]}{\langle \sigma_i^2 - (q-1) \sigma_i - q \rangle}.
$$

If we consider the algebra $H_n(q)$ as a vector space over $\mathbb{C}$, then it is isomorphic to the vector space $\mathbb{C}[S_n]$. Thus to study $H_{g,n}(q)$, we define a group $G_{g,n}$ as the quotient of $B_{g,n}$ by the relations
$$
\sigma_i^2 = 1,~~~i = 1, 2, \ldots, n-1.
$$
Denote the natural homomorphism $B_{g,n} \longrightarrow G_{g,n}$ by $\psi$ and let $\psi(a_{ij}) = b_{ij}$.

\begin{theorem}\label{t3}
The group $G_{g,n}$ is the semi-direct product $G_{g,n} = F_g^{n} \leftthreetimes S_n$ of the direct product of $n$ copies of free group $F_g$ of rank $g$ and the symmetric group $S_n$.
\end{theorem}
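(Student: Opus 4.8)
The plan is to identify $G_{g,n}$ explicitly by constructing two mutually inverse homomorphisms between $G_{g,n}$ and $F_g^n \leftthreetimes S_n$. Set up the target as follows: let $F_g^{(1)}, \ldots, F_g^{(n)}$ be $n$ copies of the free group of rank $g$, with free generators $x_1^{(j)}, \ldots, x_g^{(j)}$ for the $j$-th copy, let $F_g^n = F_g^{(1)} \times \cdots \times F_g^{(n)}$, and let $S_n$ act by permuting superscripts, $\pi \cdot x_k^{(j)} = x_k^{(\pi(j))}$. In $G_{g,n}$ I keep the names $\tau_k, \sigma_i$ for the images $\psi(\tau_k), \psi(\sigma_i)$. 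A first simplification: since $\psi$ kills every $\sigma_i^2$ and the normal closure of the $\sigma_i^2$ in $\widetilde{B}_n$ is exactly $\widetilde{P}_n$, one gets $\psi(\widetilde{P}_n)=1$, so $b_{ij}=1$ for all $g<i<j$. Thus $G_{g,n}$ is generated by the $gn$ elements $b_{k,g+j}$ and the images of the $\sigma_i$, which satisfy the Coxeter relations of $S_n$; and $G_{g,n}$ is presented by the relations of $B_{g,n}$ together with $\sigma_i^2=1$.

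For the forward map I would define $\Phi : G_{g,n} \to F_g^n \leftthreetimes S_n$ on generators by $\tau_k \mapsto x_k^{(1)}$ and $\sigma_{g+j} \mapsto s_j$, where $s_j=(j,j+1)\in S_n$, and check it respects the defining relations. The braid relations and $\sigma_i^2=1$ map to the Coxeter relations for the $s_j$; the relation $\tau_k\sigma_i=\sigma_i\tau_k$ for $i\ge g+2$ maps to the commuting of $x_k^{(1)}$ with a transposition fixing $1$; and, computing $\Phi(\sigma_{g+1}\tau_{k'}\sigma_{g+1})=x_{k'}^{(2)}$, the two families of $\tau$-relations map to the statement that $x_k^{(1)}$ commutes with $x_{k'}^{(2)}$, which holds since distinct copies of $F_g$ commute. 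Hence $\Phi$ is a well-defined homomorphism, and it is surjective because conjugating $x_k^{(1)}$ by the $s_j$ yields all $x_k^{(j)}$.

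For the reverse map I would set $\theta_k^{(j)} := (\sigma_{g+j-1}\cdots\sigma_{g+1})\,\tau_k\,(\sigma_{g+1}\cdots\sigma_{g+j-1})$ in $G_{g,n}$, so $\theta_k^{(1)}=\tau_k$, and attempt to define $\Psi : F_g^n \leftthreetimes S_n \to G_{g,n}$ by $x_k^{(j)}\mapsto\theta_k^{(j)}$ and $s_j\mapsto\sigma_{g+j}$. For $\Psi$ to be a well-defined homomorphism of the semidirect product I must verify two statements in $G_{g,n}$: (i) $\theta_k^{(i)}$ commutes with $\theta_{k'}^{(j)}$ whenever $i\ne j$, for all $k,k'$; and (ii) the equivariance $\sigma_{g+j}\,\theta_k^{(i)}\,\sigma_{g+j}=\theta_k^{(s_j(i))}$. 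Granting (i) and (ii), comparing the two maps on generators shows $\Phi$ and $\Psi$ are mutually inverse, which proves the theorem.

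The main obstacle is (i). The defining relations yield $[\tau_k,\sigma_{g+1}\tau_{k'}\sigma_{g+1}]=1$ only for $k'\ge k$ (the case $k'=k$ from the fourth family of relations, the case $k'>k$ from the fifth). To reach the missing cases $k'<k$, I would conjugate the known relation $[\tau_{k'},\sigma_{g+1}\tau_k\sigma_{g+1}]=1$ by $\sigma_{g+1}$ and use $\sigma_{g+1}^2=1$; this swaps the two poles inside the commutator and gives $[\sigma_{g+1}\tau_{k'}\sigma_{g+1},\tau_k]=1$, establishing (i) for strands $1$ and $2$ and all pole pairs. The general case of (i), for arbitrary strands $i\ne j$, then reduces to the strands-$\{1,2\}$ case by conjugating with a suitable word in the $\sigma$'s, which is exactly the content of the equivariance (ii); the latter is a routine braid manipulation from the relations $\tau_k\sigma_i=\sigma_i\tau_k$ ($i\ge g+2$) and the braid relations, once more exploiting $\sigma_i^2=1$. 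Assembling these verifications completes the construction of $\Psi$ and hence the isomorphism $G_{g,n}\cong F_g^n\leftthreetimes S_n$.
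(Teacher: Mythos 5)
Your proposal is correct, but it takes a genuinely different route from the paper's proof. The paper argues structurally: it invokes the decomposition $B_{g,n} = R_{g,n} \leftthreetimes \widetilde{B}_n$ together with $R_{g,n} = \overline{U}_{g,g+n} \leftthreetimes (\cdots \leftthreetimes (\overline{U}_{g,g+2} \leftthreetimes \overline{U}_{g,g+1})\cdots)$ from its Theorem 2, applies $\psi$ to it (each $\overline{U}_{g,g+k}$ maps onto $\psi(U_{g,g+k}) \cong F_g$ because $\psi(\widetilde{P}_n)=1$), and then upgrades the resulting iterated semidirect product of copies of $F_g$ to a direct product by checking that the images of the conjugation rules (\ref{co1})--(\ref{co4}) --- rules inherited from the embedding $B_{g,n} \leq B_{g+n}$ --- all trivialize once $b_{g+j,g+n}=1$. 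You instead work purely with the abstract presentation of $B_{g,n}$, building mutually inverse homomorphisms $\Phi$ and $\Psi$ via von Dyck's theorem; your key observation --- that in $G_{g,n}$, where $\sigma_{g+1}^{-1}=\sigma_{g+1}$, conjugating the known relation $[\tau_{k'},\sigma_{g+1}\tau_k\sigma_{g+1}]=1$ by $\sigma_{g+1}$ yields the missing commutations $[\sigma_{g+1}\tau_{k'}\sigma_{g+1},\tau_k]=1$ for $k'<k$ --- plays exactly the role that the trivialized rules (\ref{co1})--(\ref{co4}) play in the paper. What the paper's route buys is brevity given the earlier machinery, and its normal-form corollary falls out of the decomposition at once; what your route buys is independence and explicitness: you never need the embedding into $B_{g+n}$, Lemma \ref{l1}, or the paper's Theorem 2 (whose own proof is only sketched), and you exhibit the isomorphism concretely --- your elements $\theta_k^{(j)}$ coincide with the paper's $b_{k,g+j}$ and anticipate the elements $t_{ij}$ introduced later for $H_{g,n}(q)$. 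The steps you defer as routine (the equivariance (ii) by moving $\sigma_{g+j}$ through the word defining $\theta_k^{(i)}$ with braid relations and $\sigma^2=1$, and the reduction of (i) to strands $\{1,2\}$ by conjugation) are indeed standard computations and go through as you describe.
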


\begin{proof}
We know that $B_{g,n} = R_{g,n} \leftthreetimes \widetilde{B}_n$ and
$$
R_{g,n} = \overline{U}_{g,g+n} \leftthreetimes (\overline{U}_{g,g+n-1} \leftthreetimes ( \ldots \leftthreetimes (\overline{U}_{g,g+2} \leftthreetimes \overline{U}_{g,g+1})\ldots )).
$$
On the other side, $U_k$, $k= g+1, g+2, \ldots, g+n$, is generated by two subgroups:
$$
U_{g,k} = \langle a_{1k}, a_{2k}, \ldots, a_{gk} \rangle ~\mbox{and}~\widetilde{U}_{k} = \langle a_{g+1,k}, a_{g+2,k}, \ldots, a_{k-1,k} \rangle \leq \widetilde{P}_n.
$$
Note that under the homomorphism $\psi$ the subgroups $\widetilde{U}_{k}$ go to 1. Hence, $\psi(\overline{U}_{g,g+k}) = \psi(U_{g,g+k}) \cong F_g$ and
$$
\psi(R_{g,n}) = \psi(U_{g,g+n}) \leftthreetimes (\psi(U_{g,g+n-1}) \leftthreetimes ( \ldots \leftthreetimes (\psi(U_{g,g+2}) \leftthreetimes \psi(U_{g,g+1}))\ldots ))
$$
is the semi-direct product of  $n$ copies of free group $F_g$. Also, $G_{g,n} = \psi(R_{g,n}) \leftthreetimes \psi(\widetilde{B}_n) \cong \psi(R_{g,n}) \leftthreetimes S_n$.

Let us show that in fact $\psi(R_{g,n})$ is the direct product of $n$ copies of free group $F_g$. To do it it is enough to prove that
$$
\psi(R_{g,n}) = \psi(U_{g,g+n}) \times \psi(R_{g,n-1}).
$$
Denote $b_{ij} = \psi(a_{ij})$, consider the relations (\ref{co1})-(\ref{co4}) and find their images under the homomorphism $\psi$. The relation (\ref{co1}) goes to the relation
$$
b_{i,g+j}^{-\varepsilon} b_{g+j,g+n} b_{i,g+j}^{\varepsilon} = (b_{i,g+n} b_{g+j,g+n})^{\varepsilon} b_{g+j,g+n} (b_{i,g+n} b_{g+j,g+n})^{-\varepsilon},
$$
but $b_{g+j,g+n} = 1$ and we have the trivial relations.

The relation (\ref{co2}) goes to the relation
$$
b_{l,g+j}^{-\varepsilon} b_{l,g+n} b_{l,g+j}^{\varepsilon} = (b_{l,g+n} b_{g+j,g+n})^{\varepsilon} b_{l,g+n} (b_{l,g+n} b_{g+j,g+n})^{-\varepsilon},
$$
but $b_{g+j,g+n} = 1$ and we have the relation
$$
b_{l,g+j}^{-\varepsilon} b_{l,g+n} b_{l,g+j}^{\varepsilon} =  b_{l,g+n}.
$$

The relation (\ref{co3}) goes to the relation
$$
b_{i,g+j}^{-\varepsilon} b_{l,g+n} b_{i,g+j}^{\varepsilon} = [b_{i,g+n}^{-\varepsilon}, b_{g+j,g+n}^{-\varepsilon}]^{\varepsilon} b_{l,g+n} [b_{i,g+n}^{-\varepsilon}, b_{g+j,g+n}^{-\varepsilon}]^{-\varepsilon},~~~i < l < g+j,
$$
but $b_{g+j,g+n} = 1$ and we have the relation
$$
b_{i,g+j}^{-\varepsilon} b_{l,g+n} b_{i,g+j}^{\varepsilon} =  b_{l,g+n},~~~i < l < g+j.
$$

The relation (\ref{co4}) goes to the relation
$$
b_{i,g+j}^{-\varepsilon} b_{l,g+n} b_{i,g+j}^{\varepsilon} =  b_{l,g+n},~~~l < i < g+j < g+n.
$$

The image of $\widetilde{B}_n \leq B_{g,n}$ is isomorphic to $S_n$. Thus from the decomposition $B_{g,n} = R_{g,n} \leftthreetimes \widetilde{B}_n$ we get the required  decomposition for $G_{g,n}$.
\end{proof}

From this theorem we have

\begin{corollary}
Every element $h \in G_{g,n}$ has the unique normal form
$$
h = h_1 h_2 \ldots h_n \alpha,
$$
where $h_i$ is a reduced word in the free group
$$
\psi(U_{g,g+i}) = \langle b_{1, g+i}, b_{2, g+i}, \ldots, b_{g, g+i} \rangle \cong F_g,~~~i = 1, 2, \ldots, n,
$$
$\alpha \in \Lambda_n$ is a coset representative of $\psi(R_{g,n})$ in $G_{g,n}$.

\end{corollary}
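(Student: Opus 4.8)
The plan is to read off both claims --- existence and uniqueness of the normal form --- directly from the structure of $G_{g,n}$ established in Theorem~\ref{t3}, namely from the two decompositions
$$
G_{g,n} = \psi(R_{g,n}) \leftthreetimes S_n, \qquad \psi(R_{g,n}) = \psi(U_{g,g+1}) \times \psi(U_{g,g+2}) \times \cdots \times \psi(U_{g,g+n}),
$$
where the first is a semidirect product and the second, by the final part of the proof of Theorem~\ref{t3}, is an \emph{internal direct} product of $n$ copies of $F_g$. Everything in the corollary is a bookkeeping consequence of combining these two facts.

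First I would fix the transversal. Since $\psi(R_{g,n})$ is exactly the kernel of the projection $G_{g,n} \to S_n$, and $\psi$ carries the Schreier set $\Lambda_n \subset \widetilde{B}_n$ bijectively onto $S_n$ (here $\psi$ restricts to the isomorphism $\widetilde{B}_n \to S_n$), the image of $\Lambda_n$ in $G_{g,n}$ is a complete system of coset representatives of $\psi(R_{g,n})$. Hence every $h$ can be written as $h = r\alpha$ with $r \in \psi(R_{g,n})$ and $\alpha \in \Lambda_n$: project $h$ to $S_n$, take the unique $\alpha$ lying over its image, and set $r = h\alpha^{-1}$. Next I would decompose $r$. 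Because $\psi(R_{g,n})$ is the direct product of the pairwise commuting free factors $\psi(U_{g,g+i}) = \langle b_{1,g+i}, \ldots, b_{g,g+i}\rangle \cong F_g$, the element $r$ has an expression $r = h_1 h_2 \cdots h_n$ with $h_i \in \psi(U_{g,g+i})$; the prescribed increasing order is harmless precisely because these factors commute. Finally, freeness of each $\psi(U_{g,g+i})$ means that each $h_i$ is represented by a unique reduced word in $b_{1,g+i}, \ldots, b_{g,g+i}$, which assembles the claimed normal form.

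For uniqueness I would argue in the reverse direction: two normal forms $h_1 \cdots h_n \alpha = h_1' \cdots h_n' \alpha'$ have equal images in $S_n$ (the $h_i$ lying in $\psi(R_{g,n}) = \ker(G_{g,n}\to S_n)$), which forces $\alpha = \alpha'$ since the image of $\Lambda_n$ is a transversal; cancelling then gives $h_1 \cdots h_n = h_1' \cdots h_n'$ in the direct product, whence $h_i = h_i'$ factor by factor and then as reduced words by freeness. I expect no real obstacle here; the only point deserving genuine care is verifying that $\psi$ does not identify two distinct elements of $\Lambda_n$, so that their images really do form a transversal for $\psi(R_{g,n})$ --- and this is immediate from the fact that $\psi$ restricted to $\widetilde{B}_n$ is the isomorphism onto $S_n$, under which $\Lambda_n$ maps bijectively to $S_n$.
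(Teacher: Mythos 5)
Your proposal is correct and takes essentially the same route as the paper, which offers no separate argument but presents the corollary as an immediate consequence of Theorem \ref{t3}: combine the semidirect product $G_{g,n} = \psi(R_{g,n}) \leftthreetimes S_n$ with the internal direct product decomposition of $\psi(R_{g,n})$ into the free factors $\psi(U_{g,g+i}) \cong F_g$, and invoke uniqueness of reduced words in free groups. One small wording slip: $\psi$ restricted to $\widetilde{B}_n$ is not an isomorphism onto $S_n$ (its kernel is $\widetilde{P}_n$), but since $\Lambda_n$ is by construction a transversal of $\widetilde{P}_n$ in $\widetilde{B}_n$, its image under $\psi$ is indeed a bijective system of coset representatives of $\psi(R_{g,n})$ in $G_{g,n}$, exactly as your argument requires.
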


Using this normal form and the fact that these normal forms form a linear basis of $\mathbb{C}[G_{g,n}]$, we can try to define a basis of $H_{g,n}(q)$.
In the algebra $H_{g,n}(q)$ define the following elements
$$
\begin{array}{cccc}
t_{1,g+1} = t_1,~~ & t_{1,g+2}, & \ldots, & t_{1,g+n}, \\
t_{2,g+1} = t_2,~~ & t_{2,g+2}, & \ldots, & t_{2,g+n}, \\
\ldots ~~& \ldots & \ldots & \ldots \\
t_{g,g+1} = t_g, ~~& t_{g,g+2}, & \ldots, & t_{g,g+n},
\end{array}
$$
where
$$
t_{ij} = s_{j-1} s_{j-2} \ldots s_1 t_{i,g+1} s_1 \ldots s_{j-2} s_{j-1}, ~~1 \leq i \leq g,~~g+2 \leq j \leq g+n.
$$
These elements correspond to the elements $b_{ij}$ from $G_{g,n}$, which were defined in \ref{t3} as the images of the elements $a_{ij}$ under the map $\psi$.  It is not difficult to see that the elements
$$
t_{1, g+i}, t_{2, g+i}, \ldots, t_{g, g+i}
$$
generate a free group of rank $g$.

The algebra $H_{g,n}(q)$ contains the subalgebra with the set of generators $s_{g+1}, s_{g+2}, \ldots s_{g+n-1}$, which is isomorphic to $H_n(q)$.
Let $\Sigma_{g,n}$ be the following set in $H_{g,n}(q)$:
$$
u_1 u_2 \ldots u_n \sigma,
$$
where $u_i$ is a reduced word in the free group
$$
\langle t_{1, g+i}, t_{2, g+i}, \ldots, t_{g, g+i} \rangle \cong F_g,~~~i = 1, 2, \ldots, n,
$$
$\sigma$ is a basis element in $H_{n}(q)$.

\begin{conjecture}
There is an isomorphism $H_{g,n}(q) \cong \mathbb{C}[G_{g,n}]$ as $\mathbb{C}$-modules. In particular, the set $\Sigma_{g,n}$ is a basis of the algebra  $H_{g,n}(q)$.
\end{conjecture}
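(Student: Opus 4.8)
The plan is to prove the two assertions together. The desired $\mathbb{C}$-module isomorphism $H_{g,n}(q) \cong \mathbb{C}[G_{g,n}]$ amounts to an equality of the cardinalities of bases, and the corollary to Theorem~\ref{t3} already exhibits a bijection between $\Sigma_{g,n}$ and the set of normal forms $h_1 h_2 \ldots h_n \alpha$ of $G_{g,n}$; hence it suffices to prove that $\Sigma_{g,n}$ is a $\mathbb{C}$-basis of $H_{g,n}(q)$. This splits into a spanning statement and a linear-independence statement. The guiding heuristic is that the presentation $H_{g,n}(q) = \mathbb{C}[B_{g,n}]/\langle \sigma_i^2 - (q-1)\sigma_i - q\rangle$ specializes at $q=1$ to $\mathbb{C}[B_{g,n}]/\langle \sigma_i^2 - 1\rangle = \mathbb{C}[G_{g,n}]$, so that $H_{g,n}(q)$ should be a flat deformation of the group algebra and the count should come out right.

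First I would prove that $\Sigma_{g,n}$ spans, by lifting the group-theoretic normal form to the algebra. Any monomial in the generators $t_i^{\pm 1}, s_i$ can be rewritten using the quadratic relation $s_i^2 = (q-1)s_i + q$ to remove repeated letters $s_i$; each such reduction replaces $s_i^2$ by a $\mathbb{C}$-linear combination of shorter words, which are absorbed by an induction on a suitable complexity measure (for instance the number of $s$-letters together with the exponent sum of the $t$-letters). The commutation, braid, and mixed relations of the Definition then let me sort the result exactly as in the passage from $h$ to $h_1 \ldots h_n \alpha$ in the proof of Theorem~\ref{t3}: the $s$-part is driven into the subalgebra $\langle s_{g+1}, \ldots, s_{g+n-1}\rangle \cong H_n(q)$, whose basis $S$ I may use, while the conjugates of the $t$'s assemble into reduced words in the free groups $\langle t_{1,g+i}, \ldots, t_{g,g+i}\rangle$. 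It is convenient to organize this as an inductive tower, showing $H_{g,n}(q)$ is spanned over $H_{g,n-1}(q)$ by words in $t_{1,g+n}, \ldots, t_{g,g+n}$ times the coset representatives $m_{g+n,j}$, mirroring $B_{g,n} = R_{g,n} \leftthreetimes \widetilde{B}_n$ and the decomposition of $R_{g,n}$ from the previous section.

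The hard part is linear independence: spanning yields only $\dim_{\mathbb{C}} H_{g,n}(q) \le |\Sigma_{g,n}|$, and I must rule out any further collapse produced by the deformation. I would construct an explicit left $H_{g,n}(q)$-module $V$, namely the free $\mathbb{C}$-space on the symbols of $\Sigma_{g,n}$ (equivalently, on the normal forms of $G_{g,n}$), and define operators for each generator: the $t_i^{\pm 1}$ act by left multiplication inside the appropriate free factor $\langle t_{1,g+i}, \ldots, t_{g,g+i}\rangle$, while each $s_i$ acts by the standard Iwahori--Hecke operator on the $H_n(q)$-factor, reproducing $s_i^2 = (q-1)s_i + q$, coupled with the permutation of the $n$ free factors dictated by the $S_n$-action on $G_{g,n}$. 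Verifying that these operators satisfy \emph{every} defining relation of $H_{g,n}(q)$ is where essentially all the work lies; the delicate checks are the braid relation $s_i s_{i+1} s_i = s_{i+1} s_i s_{i+1}$ for the combined Hecke-plus-permutation operator and the mixed relations $t_k (s_{g+1} t_k s_{g+1}) = (s_{g+1} t_k s_{g+1}) t_k$ and $t_k (s_{g+1} t_{k+l} s_{g+1}) = (s_{g+1} t_{k+l} s_{g+1}) t_k$, where the correction term $(q-1)$ must interact correctly with the index shifts among the $t_{ij}$. Once the relations hold, the assignment $x \mapsto x \cdot e_{1}$, with $e_1$ the basis vector of the identity normal form, sends the distinct elements of $\Sigma_{g,n}$ to distinct basis vectors of $V$, forcing their linear independence and completing the proof. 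An alternative to the module construction is a Bergman diamond-lemma argument: orient the relations into a terminating rewriting system and verify confluence of all overlap ambiguities, the same mixed $t$--$s$ overlaps being the crux.
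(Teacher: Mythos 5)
You are attempting to prove a statement that the paper itself leaves open: it is labelled a \emph{conjecture}, no proof is offered, and the paper's only supporting evidence is the reference \cite{KL}, where even for $g=2$ only a \emph{spanning} set is established --- and a spanning set different from $\Sigma_{2,n}$ at that. So your proposal cannot be checked against the paper's argument; it must stand on its own, and it does not. It is a program rather than a proof: the steps you explicitly defer (``where essentially all the work lies'') are precisely the open mathematical content, and nothing in your outline reduces their difficulty.

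Two concrete gaps. First, the spanning argument. You claim the defining relations let you ``sort'' any monomial into the form $u_1 u_2 \ldots u_n \sigma$, with the $t$-letters assembling into reduced words of the free groups $\langle t_{1,g+i},\ldots,t_{g,g+i}\rangle$. But in $H_{g,n}(q)$, moving an $s$-letter past a $t$-letter is not the clean permutation it is in $G_{g,n}$: from $t_{i,j+1}=s_j t_{ij} s_j$ and $s_j^{-1}=q^{-1}\bigl(s_j-(q-1)\bigr)$ one gets
\begin{equation*}
s_j t_{ij} \;=\; t_{i,j+1} s_j^{-1} \;=\; q^{-1}\, t_{i,j+1} s_j \;-\; q^{-1}(q-1)\, t_{i,j+1},
\end{equation*}
so each exchange produces a linear combination of terms with new $t$-indices, with and without the letter $s_j$; you supply no complexity measure under which this rewriting terminates in words of the asserted shape, and the outcome of \cite{KL} (a spanning set for $g=2$ that is \emph{not} $\Sigma_{2,n}$) is a warning that this induction is not routine. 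Second, and more seriously, the linear-independence step is circular as stated: you posit a module $V$ on the symbols of $\Sigma_{g,n}$, but the operators are never defined --- the phrase ``Hecke operator coupled with the permutation of the free factors'' is ambiguous exactly on the terms where the $(q-1)$-corrections above enter, namely $s_i\cdot(u_1\cdots u_n\sigma)$ when $s_i$ must be commuted past the $u_j$ --- and the verification of the braid and mixed relations $t_k(s_{g+1}t_{k+l}s_{g+1})=(s_{g+1}t_{k+l}s_{g+1})t_k$ is omitted entirely. A consistent choice of such operators exists (essentially) if and only if the conjecture is true, so ``once the relations hold'' assumes what is to be proved. The same objection applies to the diamond-lemma alternative: resolving the $t$--$s$ overlap ambiguities (together with those coming from invertibility of the $t$'s) \emph{is} the problem, not a closing formality. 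Your framework (faithful module or confluent rewriting, mirroring $B_{g,n}=R_{g,n}\leftthreetimes\widetilde{B}_n$ and Theorem~\ref{t3}) is the right kind of strategy --- it is how the $g=1$ case was settled --- but as written it contains no argument that would close the conjecture.
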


The algebra $H_{2,n}(q)$ is the subject of study in \cite{KL}, where one set of
elements in $H_{2,n}(q)$, different from our $\Sigma_{2,n}$, is proved to be a
spanning set for the algebra.

\medskip

At the end, we formulate the following question for further investigation.

\begin{question}
Is it possible to define a Markov trace on the algebra $\bigcup_{n=1}^{\infty} H_{g,n}(q)$ and construct some analogue  of the HOMLYPT polynomial that is an invariant of  links in the handlebody $\mathcal{H}_g$?
\end{question}

\end{document}